\newtheorem{theorem}{Theorem}
\newtheorem{lemma}[theorem]{Lemma}
\theoremstyle{definition}
\newtheorem{note}[theorem]{Note}
\newtheorem{dfn}[theorem]{Definition}
\newtheorem{rem}[theorem]{Remark}
\DeclareMathOperator{\Aut}{Aut}
\DeclareMathOperator{\rank}{rank}
\newcommand{\cD}{\mathcal{D}}
\newcommand{\cB}{\mathcal{B}}
\title{Ternary codes, biplanes, and the nonexistence of some 
quasi-symmetric and quasi-3 designs}
\author{Akihiro Munemasa\thanks{Graduate School of Information Sciences, Tohoku University, Sendai, 980-8579 Japan. email:munemasa@math.is.tohoku.ac.jp} and Vladimir D. Tonchev\thanks{Mathematical Sciences, Michigan Technological University, Houghton, Michigan, 49931 USA. email:tonchev@mtu.edu}}
\begin{document}
\maketitle

\begin{abstract}
The dual codes of the ternary linear codes of the residual designs
of biplanes on 56 points are used to prove the nonexistence
of quasi-symmetric 
2-$(56,12,9)$ and 2-$(57,12,11)$ designs with intersection
numbers 0 and 3, 
and the nonexistence of a 2-$(267,57,12)$ quasi-3 design.
The nonexistence of a 2-$(149,37,9)$ quasi-3 design is also proved. 
\medskip

{\bf Keywords:} linear code, quasi-symmetric design, symmetric design, 
residual design, biplane, quasi-3 design.

\end{abstract}

\section{Introduction}

We assume familiarity with basic facts and notions from
combinatorial design theory and coding theory
 (\cite{AK}, \cite{BJL}, \cite{CRC},  \cite{HP}, \cite{T88}).

A combinatorial {\it design} (or an {\it incidence structure}) 
is a pair
$\cD=(X, \cB)$ of a finite set $X=\{ x_i \}_{i=1}^v$
of {\it points},
and a collection $\cB=\{ B_j \}_{j=1}^b$ of subsets 
$B_j \subseteq X$, called {\it blocks}.
The (points by blocks) {\it incidence matrix} $A=( a_{i,j})$ of
a design $\cD$  
with $v$ points and $b$ blocks is a $(0,1)$-matrix with $v$
rows indexed by the points and $b$ columns indexed by the blocks, 
where $a_{i,j}=1$ if the $i$th point belongs to the $j$th block, and 
$a_{i,j}=0$ otherwise.
The transposed matrix $A^T$ is called the blocks by points incidence 
matrix of $\cD$.
If $p$ is a prime number, the $p$-rank of $A$
(or $\rank_{p}A$),  is defined as the rank of $A$ 
over a finite field of characteristic $p$.

Given a design $\cD$ with $v$ points  and $b$ blocks,
and a finite field $F=GF(q)$,
one can define two linear codes over $F$ associated with $\cD$:
the code of length $b$ spanned by the rows of the $v$ by $b$ 
incidence matrix $A$ is called the code of $\cD$ spanned by the points,
while the code of length $v$ spanned by the columns of $A$ 
is called the code of $\cD$ spanned by the blocks. 
 
Let $\cD=(X,\cB)$ be a design, and let $B \in \cB$ be a block 
of $\cD$.
The incidence structure
${\cD}^B$ = $(X',\cal{B'})$, 
where
\[ X'=B, \ {\cal{B'}}= \{ B\cap B_j \ | \ B_j \in {\cB}, B_j \neq B \}, \]
is called the {\it derived} design of $\cD$ with respect to block $B$.

The incidence structure
${\cD}_B=(X'', \cal{B''})$, 
 where
\[ X'' = X\setminus B, \ {\cal{B''}}=\{ B_j \setminus (B_j\cap B) \ | \  B_j \in {\cB}, B_j \neq B \}, \]
is called the  {\it residual} design of $\cD$ with respect to block $B$.

\begin{dfn}
\label{led}
Let $A$ be the $v$ by $b$ incidence matrix of a design $\cD$
with $v$ points and $b$ blocks, 
and let $A''$ be the incidence matrix
of a residual design ${\cD}_B$ with respect to a block $B$.
 The residual design ${\cD}_B$ is said to be {\bf linearly embeddable} 
\cite{Ton17} over $F=GF(p)$, ($p$ prime), if
\begin{equation}
\label{eq3}
\rank_p A = \rank_p A'' +1.
\end{equation}
\end{dfn}
\begin{note}
\label{cled}
A sufficient condition for a residual design ${\cD}_B$ to be linearly
embeddable is that the minimum distance of the linear code over $F$
spanned by the blocks of $\cD$ is equal to $|B|$ 
\cite[Theorem 2.2]{Ton17}.
\end{note}

Given a design $\cD=(X, \cB)$, its {\it dual}
design $\cD^*$ is the incidence structure having as points the blocks of
$\cD$, and having as blocks the points of $\cD$, 
where a point and a block
of $\cD^*$ are incident if and only if the corresponding block and point of
$\cD$ are incident.
If $A$ is the incidence matrix of $\cD$,
then $A^T$ is the incidence matrix of the dual design $\cD^*$.

Given integers $v\ge k \ge t \ge 0$, $\lambda\ge 0$, a $t$-$(v,k,\lambda)$ 
design (or briefly, a $t$-design)
 $\cD$
is an incidence
structure with $v$ points and blocks of size $k$ such that every $t$-subset
 of points is contained
in exactly $\lambda$ blocks. A $t$-$(v,k,\lambda)$ design is also 
an $s$-$(v,k,\lambda_s)$
design for every integer $s$ in the range  $0 \le s \le t$, with
$ \lambda_s = \lambda{ v-s \choose t-s }/{k-s \choose t-s}$. 

Let $\cD=(X, \cB)$ be a $t$-$(v,k,\lambda)$ design,
and let $x\in X$ be a point.
The {\it derived design} ${\cD}^{x}$  with respect  $x$
is the $(t-1)$-$(v-1,k-1,\lambda)$ design with point set $X\setminus \{ x \}$,
and block set $\{ B\setminus \{ x \} \ | \  B\in {\cB}, x\in B \}$. 

The {\it residual design}   ${\cD}_{x}$ with respect to $x$
is the $(t-1)$-$(v-1,k,\lambda_{t-1} - \lambda)$ design
with point set $X\setminus \{ x \}$,
and block set $\{ B \ | \  B\in {\cB}, x\notin B \}$.

If $\cD$ is a  2-$(v,k,\lambda)$ design with $v>k>0$,
the number of blocks $b=v(v-1)\lambda/(k(k-1))$
 satisfies the Fisher inequality:
\begin{equation}
\label{Fish}
 b \ge v, \end{equation}
and the equality $b=v$ holds if and only if every two blocks of $D$
share exactly $\lambda$ points.

A 2-$(v,k,\lambda)$ design $D$ with $b=v$ is called {\it symmetric}.
The dual design  $\cD^*$ of a symmetric 2-$(v,k,\lambda)$ design $\cD$
is a symmetric design having the same parameters as $\cD$.
A symmetric design is {\it self-dual} if it is isomorphic to its dual design.
A {\it biplane} is a symmetric design with $\lambda=2$.

\begin{note}
\label{n1}
Assume that  $\cD$ is a 2-$(v,k,\lambda)$ design.
If $x$ is a point of $\cD$,
the derived design ${\cD}^{x}$ is a 1-$(v-1,k-1,\lambda)$ design,
while the residual design ${\cD}_{x}$ is a 1-$(v-1,k,r-\lambda)$ design,
where $r = \lambda_1 =\lambda(v-1)/(k-1)$ is the number of blocks
of $\cD$ that contain $x$.

If  $\cD$ is a symmetric 2-$(v,k,\lambda)$ design, then $r=k$
and ${\cD}_{x}$ is a 1-$(v-1,k,k-\lambda)$ design.
In addition, if $B$ is a block, the derived design ${\cD}^B$
is a 2-$(k,\lambda,\lambda-1)$ design, while the residual design
${\cD}_B$ is a 2-$(v-k,k-\lambda,\lambda)$ design.
\end{note}

A 2-$(v,k,\lambda)$ design is {\it quasi-symmetric} with intersection
numbers $x$, $y$, ($x<y$), if every two blocks intersect in either $x$ or $y$ points.
A brief survey on quasi-symmetric designs is given in 
\cite{Mohan}. Links between quasi-symmetric designs and
error-correcting codes are discussed in \cite{Tcd}.

Examples of quasi-symmetric designs are:
(1) unions of identical copies of symmetric 2-designs;
(2) non-symmetric 2-$(v,k,1)$ designs; 
(3) strongly resolvable designs; (4) residual designs of biplanes.
A quasi-symmetric 2-$(v,k,\lambda)$ design with $2k < v$ which does not belong to any of 
these four classes is referred to as {\it exceptional}.

The classification of exceptional quasi-symmetric designs is a difficult
open problem. A table of  admissible parameters for exceptional
quasi-symmetric designs with number of points $v\le 70$ is
given in \cite[Table 48.25]{Mohan}. 
This table is an updated version of Neumaier's
table \cite{Neu} published in 1982.

Two admissible parameters sets for exceptional quasi-symmetric
designs whose existence  has been unknown since 1982,
are 2-$(56,12,9)$, ($x=0$, $y=3$), and 2-$(57,12,11)$, ($x=0$, $y=3$).
It is the goal of this paper to show that quasi-symmetric designs with these
parameters do not exist. 
Note that the nonexistence of the latter is a direct consequence of that of
former. We thank the anonymous reviewer who pointed out 
that the nonexistence of the latter is essentially known, if
one notices that 
the latter is a $3$-$(57,12,2)$ design. See Remark~\ref{rem:rev}.

The nonexistence of a quasi-symmetric 2-$(57,12,11)$ design
implies also the nonexistence of a quasi-3 design with 
parameters 2-$(267,57,12)$.
Similarly, using the nonexistence of quasi-symmetric 2-$(37,9,8)$ designs
with intersection numbers 1 and 3 \cite{HMT}, we show that a quasi-3
design with parameters 2-$(149,37,9)$ does not exist.
The existence of quasi-3 designs with these parameters
was a long standing open question \cite{McG}.

\section{Residual 2-$(45,9,2)$ designs and their ternary codes}

Our proof of the nonexistence of a quasi-symmetric 2-$(56,12,9)$ design
is based on the following lemmas.

\begin{lemma}
\label{lem1}
Suppose that $\cD=(X,\cB)$ is a  quasi-symmetric 2-$(56,12,9)$ design
with intersection numbers 0 and 3, and let $z\in X$ be a point of  $\cD$.
\begin{enumerate}[(i)]
\item The derived design ${\cD}^z$ is a 1-$(55,11,9)$ design with 45 blocks
whose dual design $({\cD}^z)^*$ is a 2-$(45,9,2)$ design.
\item The residual design ${\cD}_z$ is a 1-$(55,12,36)$ design with 165 blocks.
The columns of the $55\times 165$ incidence matrix of ${\cD}_z$ 
belong to the dual code
$C^\perp$ of the  linear code $C$ over $GF(3)$ spanned by the columns 
of the $55\times 45$ incidence matrix of ${\cD}^z$.
\end{enumerate}
\end{lemma}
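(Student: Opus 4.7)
The plan is to compute the relevant parameters using Note~\ref{n1}, identify how blocks of $\cD^z$ and $\cD_z$ interact with the underlying intersection numbers $\{0,3\}$, and then translate ``two blocks meet in a value divisible by $3$'' into an orthogonality statement over $GF(3)$.

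For part (i), Note~\ref{n1} immediately gives that $\cD^z$ is a $1$-$(55,11,9)$ design, and since $\cD$ is a $2$-$(56,12,9)$ design with replication number $r=9\cdot 55/11=45$, the derived design $\cD^z$ has $45$ blocks. To identify $(\cD^z)^*$ as a $2$-$(45,9,2)$ design, I would use the standard observation that the dual of a $1$-design all of whose block-pairs intersect in a constant $\mu$ is a $2$-design with $\lambda=\mu$. So the task reduces to showing any two blocks of $\cD^z$ meet in exactly $2$ points. Take blocks $B_1\setminus\{z\}$ and $B_2\setminus\{z\}$ of $\cD^z$; since both $B_1$ and $B_2$ contain $z$ in $\cD$, their intersection is nonempty and therefore equals $3$ by the quasi-symmetric hypothesis, giving $|(B_1\setminus\{z\})\cap(B_2\setminus\{z\})|=2$. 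A parameter check ($b k = v r$ and $\lambda(v-1)=(k-1)r$) then confirms consistency.

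For part (ii), Note~\ref{n1} gives that $\cD_z$ is a $1$-$(55,12,45-9)=1$-$(55,12,36)$ design, and its block count is $210-45=165$ since $\cD$ has $v(v-1)\lambda/(k(k-1))=210$ blocks in total. Let $M$ denote the $55\times 45$ incidence matrix of $\cD^z$ and $N$ the $55\times 165$ incidence matrix of $\cD_z$, both having rows indexed by $X\setminus\{z\}$. A column of $M$ is the characteristic vector of $B'\setminus\{z\}$ for some block $B'\ni z$, and a column of $N$ is the characteristic vector of a block $B\not\ni z$. Their standard inner product equals
\[
|(B'\setminus\{z\})\cap B|=|B'\cap B|,
\]
because $z\notin B$. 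Since $B'$ and $B$ are distinct blocks of $\cD$, this intersection is $0$ or $3$; in either case it vanishes modulo $3$. Hence every column of $N$ is orthogonal over $GF(3)$ to every column of $M$, which is precisely the statement that the columns of $N$ lie in $C^\perp$.

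There is no real obstacle here: the argument is a direct translation of the intersection numbers $\{0,3\}$ into congruence conditions mod~$3$, combined with a bookkeeping of design parameters via Note~\ref{n1}. The only point requiring mild care is the constant-intersection step in (i), where one must exclude the possibility of intersection $0$ between two blocks of $\cD^z$; this is forced because both originating blocks contain the common point $z$.
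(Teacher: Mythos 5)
Your proof is correct and follows essentially the same route as the paper: part (i) rests on the observation that two blocks of $\cD$ through the common point $z$ cannot be disjoint and hence meet in exactly $3$ points, and part (ii) reduces to noting that the inner product of a block of $\cD_z$ with a block of $\cD^z$ equals their intersection size in $\cD$, which is $0$ or $3$ and so vanishes mod $3$. The paper states these two observations without the parameter bookkeeping you supply, but the substance is identical.
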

 \begin{proof}
(i) Since every two non-disjoint blocks of $\cD$ 
share exactly three points, every two blocks of ${\cD}^z$
share exactly two points. This implies that $({\cD}^z)^*$ 
is a 2-$(45,9,2)$ design.

(ii) The inner product of the incidence vector of
every block $B$ of  ${\cD}_z$ with the incidence vector 
 of every block of  ${\cD}^z$ is either 0 or 3. 
\end{proof}

By a theorem of Hall and Connor \cite{HC},
every 2-$(45,9,2)$ design is a residual design
with respect to a block of a biplane with parameters 2-$(56,11,2)$. 
There are five nonisomorphic biplanes $Bi$, ($1 \le i \le 5$)
with these parameters \cite[15.8]{Hall}, all five being self-dual. 
The first biplane, $B1$, was found by Hall, Lane and Wales \cite{HLW},
$B2$ was found by Mezzaroba and Salwach \cite{SM},
$B3$ and $B4$ were found by Denniston \cite{Den},
and $B5$ was found by Janko and Trung \cite{JT}.
The residual 2-$(45,9,2)$ designs
of the five biplanes fall into 16 isomorphism classes
(see \cite[Table 2]{MR1700843}).

It was shown by an exhaustive computer search
(Kaski and \"Osterg\aa rd \cite{MR2384014}),
that up to isomorphism, there are exactly five biplanes
with 56 points, and consequently, exactly 16 nonisomorphic
2-$(45,9,2)$ designs.

Using Lemma~\ref{lem1}, the existence question for a
2-$(56,12,9)$ design can be resolved by computing the sets 
of all $(0,1)$-vectors of weight 12 in the dual codes of 
the ternary linear codes spanned by the
rows of the $45\times 55$  
incidence matrices of the 16 nonisomorphic 2-$(45,9,2)$ designs,
and checking if any of these sets contains a subset of 165 vectors 
that form the incidence matrix of a 1-(55,12,36) design with 165 blocks
of size 12, 
such that every two blocks are either disjoint or share exactly 
three points.


\begin{lemma}
\label{lemb}
Let ${\cD}_B$ be a 2-$(45,9,2)$ design with point set 
$\{ 1,2,\dots, 45 \}$, being the residual
design of a 2-$(56,11,2)$ biplane $\cD$ with respect
to a block $B=\{ 46,\dots, 56\}$. Let $A$ be an 
incidence matrix of $\cD$ given by \eqref{eqd},
where $A''$ is the
 $45\times 55$ incidence matrix of ${\cD}_B$,
and $A'$ is the $11\times 55$ incidence matrix of the derived 
2-$(11,2,1)$ design ${\cD}^B$.
\begin{equation}
\label{eqd}
A=
\begin{pmatrix} 
&0\\ 
A''&\vdots\\
&0\\
&1\\ 
A'&\vdots\\
&1\end{pmatrix}.
\end{equation}

If $c=(c_1,\dots,c_{55})$ is a $(0,1)$-codeword
of weight 12 in the dual code $(L'')^\perp$ of
the ternary linear code $L''$ spanned by the rows of $A''$,
then $c^*=(c_1,\dots,c_{55},0)$ belongs to the dual code $L^\perp$
of the ternary linear code $L$ spanned by the rows of $A$.
\end{lemma}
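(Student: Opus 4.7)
The plan is to reduce the assertion to the single linear condition $A' c \equiv 0 \pmod 3$ and then derive this from the identity $A^T A = (k-\lambda)I + \lambda J$ that every symmetric $2$-design satisfies. Writing $c^*$ as a column vector, the condition $c^* \in L^\perp$ is equivalent to $A c^* = 0$, and block-multiplying using \eqref{eqd} gives
\[
A c^* = \begin{pmatrix} A'' c \\ A' c \end{pmatrix}.
\]
The hypothesis $c \in (L'')^\perp$ is exactly $A'' c = 0$, so the whole proof reduces to showing $A' c \equiv 0 \pmod 3$.

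For this, I would compute $A^T A$ in block form from \eqref{eqd} and compare with the symmetric-design identity $A^T A = 9 I_{56} + 2 J_{56}$ (here $k - \lambda = 9$ and $\lambda = 2$). The upper-left $55 \times 55$ block yields
\[
(A'')^T A'' + (A')^T A' = 9 I_{55} + 2 J_{55},
\]
which modulo $3$ collapses to $2 J_{55}$. Applying both sides to $c$, the first summand $(A'')^T A'' c$ vanishes because $A'' c = 0$, and $2 J_{55} c = 2(\mathbf{1}^T c)\mathbf{1}_{55} = 24 \cdot \mathbf{1}_{55} \equiv 0 \pmod 3$ since $c$ has weight $12$. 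Hence $(A')^T (A' c) \equiv 0 \pmod 3$; equivalently, $A' c \in \F_3^{11}$ is orthogonal to every column of $A'$.

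To finish, I would translate this orthogonality into $A' c = 0$. Since $\cD^B$ is the unique $2$-$(11,2,1)$ design consisting of all pairs of points of $B$, each column of $A'$ is the indicator $e_p + e_q$ of some pair $\{p, q\} \subseteq B$. Orthogonality of $A' c$ to every such vector gives $(A'c)_p + (A'c)_q \equiv 0 \pmod 3$ for every two distinct points $p, q \in B$. Combining three such relations for distinct $p, q, r$ yields both $(A'c)_p = (A'c)_r$ and $(A'c)_p + (A'c)_r \equiv 0$, whence $2(A'c)_p \equiv 0 \pmod 3$, forcing $(A'c)_p = 0$ for every $p \in B$, as required.

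The main obstacle I anticipate is spotting the right algebraic tool, namely the block decomposition of $A^T A$, rather than any difficult computation afterward. Once that identity is in hand, the rest is linear algebra driven by the arithmetic coincidences $k - \lambda \equiv 0 \pmod 3$ and $|\supp c| \equiv 0 \pmod 3$; the same scheme would fail over $\F_2$, so working over the ternary field is essential.
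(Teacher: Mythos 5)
Your proof is correct, and it takes a genuinely different route from the paper's. The paper deduces the lemma from the \emph{linear embeddability} of the residual design over $GF(3)$: it invokes the computational fact (from Key--Tonchev) that the ternary codes of all five biplanes on $56$ points have minimum distance $11$, which via Note~\ref{cled} gives $\rank_3 A=\rank_3 A''+1$; hence $L$ is spanned by the rows of $(A''\mid 0)$ together with the all-one vector of length $56$, and $c^*$ is orthogonal to all of these (to the last one because $\mathrm{wt}(c)=12\equiv 0\pmod 3$ and the coordinate at $B$ is $0$). You instead obtain the missing orthogonality $A'c\equiv 0$ directly from the block-intersection identity $(A'')^TA''+(A')^TA'=9I_{55}+2J_{55}$ reduced mod $3$, together with the fact that the columns of $A'$ run over all $55$ pairs of $B$ (equivalently, that these pairs span $\F_3^{11}$, so a vector orthogonal to all of them vanishes). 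Your argument is self-contained: it uses neither the classification of biplanes of order $9$ nor the computed minimum distances of their codes, and it would apply verbatim to any (hypothetical) biplane with these parameters; what the paper's route buys is a slightly stronger structural conclusion, namely an explicit spanning set for $L$, which pins down $L^\perp$ exactly rather than just certifying membership of $c^*$. Both proofs ultimately rest on the same arithmetic coincidences $k-\lambda=9\equiv 0$ and $\mathrm{wt}(c)=12\equiv 0\pmod 3$.
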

\begin{proof}
The ternary codes of the five biplanes with parameters 2-$(56,11,2)$ 
were computed in \cite{MR1700843}, and all five codes have minimum distance 11.
Thus, by Definition~\ref{led} and Note~\ref{cled}, every residual
2-$(45,9,2)$ design is linearly embeddable over $GF(3)$, and
\[ \rank_{3}A = \rank_{3}A'' +1. \]
This implies
\begin{equation}\label{LA''}
\dim L=
\rank_3\begin{pmatrix} 
&0\\ 
A''&\vdots\\
&0\\
\bar{1}_{55}&1
\end{pmatrix}
\end{equation}
Since the sum of all rows of $A$ over $GF(3)$ is the constant vector with all entries equal to 2,
the all-one vector $\bar{1}_{56}$ =$(\bar{1}_{55},1)$ of length 56 belongs to $L$.
Thus, $L$ is spanned by the row vectors of the matrix in the right-hand side of
\eqref{LA''}, to which $c^*$ is orthogonal. Therefore, $c^*\in L^\perp$.
\end{proof}

Lemma~\ref{lem1} and Lemma~\ref{lemb} imply the following (see Fig.~\ref{fig:lem5}
for an illustration).

\begin{lemma}
\label{lem5}
Let $\cD$ be a 2-$(56,11,2)$ biplane,
 and let $S$ be the set of all $(0,1)$-vectors
of weight 12 in the dual code of the ternary linear code
spanned by the points of $\cD$. 
 Let $\cD_B$ be a residual 2-$(45,9,2)$ design
of $\cD$ with respect to a block $B$.

A necessary condition for the existence of a quasi-symmetric
2-$(56,12,9)$ design $\hat{\cD}$ having ${\cD_B}^*$ as a derived design
is that the subset $S_B \subset S$ consisting of all vectors having 
0 in the position labeled by $B$, contains a set of 165 vectors
that are the incidence vectors of the blocks of 
a 1-$(55,12,36)$ design $\hat{\cD}_z$, such that  every two blocks of $\hat{\cD}_z$
are either disjoint or share exactly three points.
\end{lemma}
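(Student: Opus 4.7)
The plan is to chain Lemma~\ref{lem1}(ii) and Lemma~\ref{lemb} so that the incidence vectors of the blocks of the residual design $\hat{\cD}_z$ of $\hat{\cD}$ show up, after appending a zero, inside $S_B$.

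First I would start from the assumption that $\hat{\cD}$ is a quasi-symmetric 2-$(56,12,9)$ design with intersection numbers $0,3$ having ${\cD_B}^*$ as derived design, and fix the point $z$ for which $\hat{\cD}^z \cong (\cD_B)^*$; equivalently, $(\hat{\cD}^z)^* \cong \cD_B$. Under this identification, the ternary code $C$ of Lemma~\ref{lem1}(ii), spanned by the columns of the $55\times 45$ incidence matrix of $\hat{\cD}^z$, coincides with the code $L''$ of Lemma~\ref{lemb}, spanned by the rows of the $45\times 55$ incidence matrix $A''$ of $\cD_B$: the columns of $\hat{\cD}^z$ are, up to labeling, the same vectors as the rows of $(\hat{\cD}^z)^* = \cD_B$. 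This is the single bookkeeping point that links the two lemmas.

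Next I would apply Lemma~\ref{lem1}(ii): $\hat{\cD}_z$ is a 1-$(55,12,36)$ design, and each of the $165$ columns of its incidence matrix is a $(0,1)$-vector $c$ of weight $12$ lying in $C^\perp = (L'')^\perp$. Applying Lemma~\ref{lemb} to each such $c$, the extended vector $c^* = (c,0)$ of length $56$ belongs to $L^\perp$, where $L$ is the ternary code spanned by the points of the biplane $\cD$. Since $c^*$ is a $(0,1)$-vector of weight $12$ with a $0$ in the coordinate indexed by $B$, it lies in $S_B$. This produces $165$ distinct elements of $S_B$ that form the incidence matrix of a 1-$(55,12,36)$ design.

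Finally, any two blocks of $\hat{\cD}_z$ are distinct blocks of the quasi-symmetric design $\hat{\cD}$, hence share either $0$ or $3$ points; the appended zero coordinate preserves this intersection behavior, yielding the required condition. I do not foresee a genuine obstacle: the statement is essentially a packaging of Lemmas~\ref{lem1} and~\ref{lemb} into a single necessary condition phrased in terms of the fixed ambient code $L^\perp$ and the fixed set $S_B$, so that the existence of $\hat{\cD}$ can in principle be tested by searching $S_B$ for a suitable $165$-subset.
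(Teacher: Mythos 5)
Your proposal is correct and follows exactly the route the paper takes: the paper gives no separate argument for Lemma~\ref{lem5} beyond asserting that it is the combination of Lemma~\ref{lem1}(ii) and Lemma~\ref{lemb}, which is precisely the chain you spell out (identifying $C$ with $L''$ via $(\hat{\cD}^z)^*=\cD_B$, then appending the zero coordinate at $B$). The intersection-number check at the end is the same observation used throughout the paper, so nothing is missing.
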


\begin{figure}
\begin{center}
\begin{tikzpicture}[scale=1] 
 \def\a{6}
 \def\b{2}
\draw[red,very thick] (0,1) -- (\a+1,1);
\draw[red,very thick] (0,1) -- (0,\b+3);
\draw[red,very thick] (0,\b+3) -- (\a+1,\b+3);
\draw[red,very thick] (\a+1,1) -- (\a+1,\b+3);
\draw[black,very thick] (1,0) -- (\a+2,0);
\draw[black,very thick] (1,0) -- (1,\b+1);
\draw[black,very thick] (\a+2,0) -- (\a+2,\b+1);
\draw[black,very thick] (1,\b+1) -- (\a+2,\b+1);
\draw[black] (\a+1,0) -- (\a+1,1);
\draw[black] (\a+1,1) -- (\a+2,1);
\draw[red] (0,\b+1) -- (1,\b+1);
\draw[red] (1,\b+3) -- (1,\b+1);
\node (DB) at (4,2) {\begin{tabular}{rcl} blocks & & points \\
 points \fbox{$\mathcal{D}_B$} & = &\fbox{$\mathcal{D}_B^*$} blocks
 \end{tabular}};
\node (B1) at (\a+1.5,0.5) {$\bar{1}$};
\node (B0) at (\a+1.5,2) {$\bar{0}$};
\node (DBB) at (4,0.5) {$\mathcal{D}^B$};
\node (DD1) at (0.5,2) {\color{red} $\bar{1}$};
\node (DD0) at (0.5,4) {\color{red} $\bar{0}$};
\node (11) at (-0.5,0.5) {$11$};
\node (45) at (-0.5,2) {$45$};
\node (165) at (-0.5,4) {$165$};
\node (SB) at (4,4) {\color{red} $\hat{\mathcal{D}}_z$};
\node (SBL) at (\a+4,4) {\color{red} rows of $\hat{\mathcal{D}}_z\subseteq S_B\subseteq S\subseteq L^\perp$};
\node (DL) at (\a+4,1.5) {rows of $\mathcal{D}$ spans $L$};
\node (z) at (0.5,-0.5) {{\color{red} $z$}};
\node (55) at (4,-0.5) {$55$};
\node (B) at (\a+1.5,-0.5) {$B$};
\end{tikzpicture} 
\end{center}
\caption{Lemma \ref{lem5}}
\label{fig:lem5}
\end{figure}

\section{The nonexistence of quasi-symmetric
$2$-$(56,12,9)$ and $2$-$(57,12,11)$ designs
}

\begin{theorem}\label{thm:main}
\begin{enumerate}[(i)]
\item 
\label{main1}
A quasi-symmetric $2$-$(56,12,9)$ design
with block intersection numbers $0, 3$ does not exist.
\item
\label{main2}
A quasi-symmetric $2$-$(57,12,11)$ design
with block intersection numbers $0, 3$ does not exist.
\end{enumerate}
\end{theorem}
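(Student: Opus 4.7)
My approach is to apply Lemma~\ref{lem5} and reduce the existence question to a finite computer search. Suppose toward contradiction that a quasi-symmetric $2$-$(56,12,9)$ design $\hat{\cD}$ with intersection numbers $0$ and $3$ exists, and fix a point $z$. By Lemma~\ref{lem1} the dual $(\hat{\cD}^z)^*$ of the derived design is a $2$-$(45,9,2)$ design; by the Hall--Connor theorem together with the classification of 56-point biplanes cited in the introduction, $(\hat{\cD}^z)^*$ is isomorphic to the residual $\cD_B$ of one of the five biplanes $Bi$ with respect to some block $B$. It therefore suffices to show that for each of the 16 isomorphism classes of residual $2$-$(45,9,2)$ designs (represented by an appropriate pair $(Bi,B)$), the conclusion of Lemma~\ref{lem5} fails.

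\textbf{The computation.} For each representative pair $(Bi,B)$ I would fix an incidence matrix $A$ of $Bi$ arranged in the block-ordered form \eqref{eqd}, form the ternary code $L$ spanned by its rows, and enumerate the set $S$ of $(0,1)$-vectors of weight $12$ in $L^\perp$. After extracting the subset $S_B\subseteq S$ of vectors with zero entries in the coordinates indexed by $B$, I would build the graph on $S_B$ whose edges join pairs whose supports intersect in $0$ or $3$ points, and search for a clique of size $165$ whose vectors sum to $36\cdot\bar{1}_{55}$ (enforcing the $1$-$(55,12,36)$ property). The search is a constrained backtracking over $S_B$, and the aim is to certify that no such clique exists in any of the 16 cases.

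\textbf{Main obstacle.} The crux is making this enumeration tractable. The ternary codes of the five biplanes have already been analyzed in \cite{MR1700843}, so the dimensions of $L$ and $L^\perp$ are known and weight-$12$ constant-weight enumeration in $L^\perp$ is feasible; but $|S_B|$ could still be large. The two combinatorial constraints should prune heavily --- the replication $36$ forces each of the $55$ coordinates outside $B$ to appear in exactly $36$ of the selected vectors, and the intersection restriction makes the compatibility graph sparse --- and I would exploit the action of $\Aut(Bi)$ on $S$ to collapse symmetric branches. If the search returns empty for every case, part~(i) is established.

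\textbf{Part (ii).} Part~(ii) is reduced to part~(i) by a parameter-counting argument. A quasi-symmetric $2$-$(57,12,11)$ design $\hat{\cD}'$ with intersection numbers $0$ and $3$ satisfies the parameter identity that (via Cameron's criterion for quasi-symmetric designs to be $3$-designs) forces it to be a $3$-$(57,12,2)$ design. The residual $\hat{\cD}'_z$ at any point $z$ is then a $2$-$(56,12,9)$ design whose blocks are exactly the blocks of $\hat{\cD}'$ that avoid $z$; any two such blocks still meet in $0$ or $3$ points. Thus $\hat{\cD}'_z$ is a quasi-symmetric $2$-$(56,12,9)$ design with intersection numbers $0$ and $3$, contradicting part~(i).
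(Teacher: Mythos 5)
Your proposal follows essentially the same route as the paper: reduce via Hall--Connor and Lemma~\ref{lem5} to a finite search among the $(0,1)$-codewords of weight $12$ in the dual ternary codes of the five biplanes, certify by computer that no admissible set of $165$ vectors exists, and then obtain (ii) from (i) by passing to the point-residual (the paper is terser here, but like you it relies on the design being a $3$-$(57,12,2)$ design, cf.\ Remark~\ref{rem:rev}). The only difference is that the paper's computation is coarser and already suffices: the dual codes of $B3$, $B4$, $B5$ contain fewer than $165$ such codewords outright, and for $B1$ and $B2$ the maximum clique in the compatibility graph on all of $S$ (not just $S_B$, and without the replication-sum constraint) is $22$ and $18$ respectively.
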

\begin{proof}
(i) 
By a theorem of Hall and Connor \cite{HC},
every 2-$(45,9,2)$ design is a residual design
with respect to a block of a biplane with parameters 2-$(56,11,2)$. 
Thus, by Lemma~\ref{lem5},
it is sufficient 
to inspect the sets of all $(0,1)$-codewords of weight 12 in 
the dual codes of the ternary codes of the five biplanes with 56 points. 
The number of such codewords 
can be found by computing with Magma \cite{magma}
the complete weight enumerator of the dual codes.
This was done in \cite{MR1700843} for three of the biplanes,
$B1$, $B2$ and $B4$, while upper bounds 91 and 22 were found for
the codes of the biplanes $B3$ and $B5$  
\cite[Table 1]{MR1700843}, and these results
were used to prove that none of the five biplanes 
can be extended to a 3-$(57,12,2)$ design.

Using Magma, we were able to compute the
exact numbers of such codewords
 for the dual codes of $B3$ and $B5$ (84 and 20, respectively).
We reproduce some of the properties of these 
codes in Table~\ref{tab:1}, where the first column lists the 
corresponding biplane, column two gives the 3-rank of its incidence matrix, 
or the code
dimension, the third column gives the order of the automorphism group 
of the biplane,
the fourth column gives the minimum distance of the code,
 and the last column gives the total number of $(0,1)$-codewords
of weight 12 in the dual code.

Since the dual codes of the biplanes $B3$, $B4$ and $B5$ each contains less than 165
$(0,1)$-codewords of weight 12, it follows from
Lemma~\ref{lem5} that none of
the dual designs of their residual 2-$(45,9,2)$ designs 
can be a derived 1-$(55,11,9)$ design 
of a quasi-symmetric 2-$(56,12,9)$ design with intersection numbers $0, 3$.

Since the automorphism group of $B1$ acts transitively 
on the set of blocks, all residual
2-$(45,9,2)$ designs of $B1$ are isomorphic.
We define a graph $\Gamma_1$ having as vertices the 2100
$(0,1)$-codewords of weight 12 in the dual code
of the ternary code spanned by the points  of $B1$, 
where two codewords are adjacent in $\Gamma$ if 
their supports  are either disjoint or share exactly three 
points. It follows from Lemma~\ref{lem5} that if
a quasi-symmetric 2-$(56,12,9)$ design exists and has  
a derived design with respect to a point
which is the dual design of a residual 2-$(45,9,2)$ design
of $B1$, then $\Gamma_1$ will contain a clique of size 165. 
 A quick computation
with Cliquer \cite{cliquer} shows that the maximum clique size 
of  $\Gamma_1$ is 22, thus none of the residual designs of $B1$
is embeddable in a quasi-symmetric 2-$(56,12,9)$ design. 
 
 The graph $\Gamma_2$ having as vertices the 516 
$(0,1)$-codewords of weight 12 in the dual code
of the ternary code spanned by the points of of $B2$,
where two codewords are adjacent in $\Gamma_2$ if
their supports  are either disjoint or share exactly three
points, has maximum clique size 18. It follows by Lemma~\ref{lem5}
that a quasi-symmetric 2-$(56,12,9)$ design having a derived design
which is the dual design of some residual 2-$(45,9,2)$ design
of $B2$, does not exist.
This completes the proof of part (i).

(ii) Suppose there exists a quasi-symmetric $2$-$(57,12,11)$ design $\cD$
with block intersection numbers $0,3$. Then the residual
design of $\cD$ with respect to a point
$p$ is a quasi-symmetric $2$-$(56,12,9)$ design
with block intersection numbers $0,3$. 
Such a design does not exist by (i).
\end{proof}

\begin{rem}\label{rem:rev}
We thank the anonymous reviewer who pointed out 
that Theorem~\ref{thm:main}~(ii) is essentially known.
Indeed, a quasi-symmetric $2$-$(57,12,11)$ design
with block intersection numbers $0$ and $3$ is necessarily a 
$3$-$(57,12,2)$ design by using \cite[Propostion~12]{Neu}
(see also \cite{Cal}). The nonexistence of a
$3$-$(57,12,2)$ design has already been established in
\cite[Corollary~2]{MR2384014}.
\end{rem}

\begin{table}
\begin{center}
\begin{tabular}{|c|c|c|c|c|}
\hline
&$\dim C$&$|\Aut Bi|$&$\min $&\# wt 12 in $C^\perp$\\
\hline
B1&20&80640&11&2100\\
B2&22&288&11&516\\
B3&26&144&11&84${}^*$\\
B4&24&64&11&148\\
B5&26&24&11&20${}^*$\\
\hline
\end{tabular}
\caption{The ternary codes of the five biplanes. Entries with ${}^*$ 
were not found exactly, but were only estimated in \cite{MR1700843}}
\label{tab:1}
\end{center}
\end{table}

\section{The nonexistence of some quasi-3 designs}

\begin{dfn}
A symmetric 2-$(v,k,\lambda)$ design $D$ is a {\it quasi-3 design}
\cite{McG} with triple intersection numbers $x$ and $y$ ($x<y$) if
 every three blocks of $D$ intersect in either $x$ or $y$ points.
\end{dfn}

Clearly, $D$ is a quasi-3 design 
if and only if every of its derived
(with respect to a block) 2-$(k,\lambda,\lambda-1)$ designs
is a quasi-symmetric design with block intersection numbers $x$ and $y$.

According to \cite[Table 47.14]{McG}, there are 12 parameter sets
of quasi-3 designs with number of points $v \le 400$, for which the 
existence of a quasi-3 design is unknown. Two of these twelve open cases are
the parameters 2-$(149,37,9)$, ($x=1$, $y=3$), and
2-$(267,57,12)$, ($x=0$, $y=3$).

\begin{theorem}
\label{267}
A quasi-3 design with parameters 2-$(267,57,12)$, ($x=0$, $y=3$),
does not exist.
\end{theorem}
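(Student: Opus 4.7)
The plan is to reduce the nonexistence of a quasi-3 $2$-$(267,57,12)$ design directly to Theorem~\ref{thm:main}~(ii) via the characterization of quasi-3 designs in terms of their derived designs. Suppose for contradiction that a symmetric design $D$ with parameters $2$-$(267,57,12)$ and triple intersection numbers $x=0$, $y=3$ exists. By Note~\ref{n1}, the derived design $D^B$ of $D$ with respect to any block $B$ is a $2$-$(k,\lambda,\lambda-1) = 2$-$(57,12,11)$ design.

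Next I would invoke the observation recorded just after the definition of quasi-3 design: $D$ is quasi-3 with triple intersection numbers $x,y$ precisely when every derived design $D^B$ is a quasi-symmetric design whose block intersection numbers are $x$ and $y$. Indeed, two blocks $B_1, B_2$ of $D^B$ correspond to the triple intersection $B \cap B_1 \cap B_2$ in $D$, so $|B_1 \cap B_2|$ inside $D^B$ takes only the values $x=0$ and $y=3$. Consequently, $D^B$ is a quasi-symmetric $2$-$(57,12,11)$ design with block intersection numbers $0$ and $3$.

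Applying Theorem~\ref{thm:main}~(ii) to $D^B$ yields a contradiction, since that theorem asserts that no such quasi-symmetric $2$-$(57,12,11)$ design exists. Hence $D$ cannot exist.

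The only nontrivial step is the reduction from the quasi-3 condition on triples of blocks in $D$ to the quasi-symmetric condition on pairs of blocks in a derived design, but this is exactly the content of the remark following the definition of quasi-3 design. The actual combinatorial work was already done in the proof of Theorem~\ref{thm:main}, which in turn bottoms out in the ternary code computations for the five biplanes on $56$ points; once those are in hand, the $2$-$(267,57,12)$ case requires no additional computation.
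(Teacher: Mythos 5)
Your proposal is correct and follows exactly the paper's own argument: pass to the derived design with respect to a block, which is a quasi-symmetric $2$-$(57,12,11)$ design with intersection numbers $0$ and $3$ by the equivalence between the quasi-3 property and quasi-symmetry of the derived designs, and then invoke Theorem~\ref{thm:main}~(ii). No differences worth noting.
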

\begin{proof}
Any derived design with respect to a block of a
quasi-3 2-$(267,57,12)$ design with triple intersection numbers
$x=0$, $y=3$ is a quasi-symmetric 2-$(57,12,11)$ design with block intersection numbers
$x=0$, $y=3$.
By Theorem~\ref{thm:main}, part
(ii), a quasi-symmetric design with the latter parameters does not exist.
\end{proof}

\begin{theorem}
\label{149}
A quasi-3 design with parameters 2-$(149,37,9)$, ($x=1$, $y=3$), does not exist.
\end{theorem}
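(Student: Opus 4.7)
The plan is to mirror the proof of Theorem~\ref{267}: reduce the nonexistence of the symmetric quasi-3 design to the nonexistence of a quasi-symmetric derived design whose nonexistence is already recorded in the literature.

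First I would suppose for contradiction that a quasi-3 design $\cD$ with parameters 2-$(149,37,9)$ and triple intersection numbers $x=1$, $y=3$ exists. Since $\cD$ is symmetric, for any block $B$ the derived design $\cD^B$ is a 2-$(k,\lambda,\lambda-1)=2$-$(37,9,8)$ design (by Note~\ref{n1}).

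Next I would argue that $\cD^B$ inherits a quasi-symmetric structure from the quasi-3 property of $\cD$. Indeed, the blocks of $\cD^B$ are the sets $B\cap B'$ for $B'\in\cB$, $B'\ne B$, and for any two such blocks $B_1'\cap B$ and $B_2'\cap B$ we have
\[
|(B_1'\cap B)\cap (B_2'\cap B)|=|B\cap B_1'\cap B_2'|\in\{x,y\}=\{1,3\}
\]
since $\cD$ is quasi-3 with triple intersection numbers $1$ and $3$. Therefore $\cD^B$ is a quasi-symmetric 2-$(37,9,8)$ design with block intersection numbers $1$ and $3$.

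Finally I would invoke the result of \cite{HMT}, which establishes that no quasi-symmetric 2-$(37,9,8)$ design with intersection numbers $1$ and $3$ exists, obtaining the required contradiction. The only point that requires any care is the observation that a quasi-3 design yields quasi-symmetric derived designs with the same intersection numbers, but this is essentially immediate from the definitions, so the main substantive step is the citation of the already-known nonexistence result in \cite{HMT}; there is no computational obstacle and no new combinatorial content beyond what Theorem~\ref{267} already illustrates.
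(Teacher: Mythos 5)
Your proposal is correct and follows exactly the paper's own argument: pass to the derived design with respect to a block, observe it is a quasi-symmetric 2-$(37,9,8)$ design with intersection numbers $1$ and $3$ (the equivalence the paper records right after the definition of quasi-3 designs), and invoke the nonexistence result of \cite{HMT}. The extra detail you supply about why the derived design inherits the intersection numbers is a correct elaboration of what the paper takes as immediate.
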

\begin{proof}
Any derived design with respect to a block of a 
quasi-3 2-$(149,37,9)$ design with triple intersection numbers $x=1$, $y=3$ 
is a quasi-symmetric 2-$(37,9,8)$ design with block intersection numbers
$x=1$, $y=3$. However, it was proved in \cite{HMT} that  quasi-symmetric
designs with the latter parameters do not exist.
\end{proof}


\begin{thebibliography}{99}

\bibitem{AK}  E. F. Assmus, Jr., J. D. Key, {\it Designs and their codes},
Cambridge University Press, Cambridge, 1992.

\bibitem{BJL} T. Beth, D. Jungnickel, H. Lenz, \emph{Design theory (2nd
edition)}. Cambridge University Press, 1999.

\bibitem{Cal} A. R. Calderbank,  
Geometric invariants for quasi-symmetric designs,
{\it J. Combin. Theory Ser. A} {\bf 47} (1988), 101--110.

\bibitem{magma}
W. Bosma, J. Cannon, {\it Handbook of Magma Functions}, School of Mathematics
and Statistics, University of Sidney, Sidney, July 22 (1999).


\bibitem{CRC} C.J. Colbourn and J.H. Dinitz, Handbook of Combinatorial Designs,
2nd ed., Chapman \& Hall/CRC, Boca Raton, 2007.

\bibitem{Den} R.H.F. Denniston, On biplanes with 56 points,
{\it Ars Combin.} {\bf 9} (1980), 167--179.

\bibitem{Hall} M. Hall, Jr., {\it Combinatorial Theory}, Second Edition,
Wiley, New York 1986.

\bibitem{HC} M. Hall, Jr., and W.S. Connor, An embedding theorem
for balanced incomplete block designs, {\it Canad. J. Math.} {\bf 6} 
(1953), 35--41.

\bibitem{HLW} M. Hall, Jr., R. Lane, and D. Wales,
Designs derived from permutation groups, {\it J. Combin. Theory}
{\bf 8} (1970), 12--22.

\bibitem{HMT} M. Harada, A. Munemasa, and V.D. Tonchev,
Self-dual codes and the nonexistence of a quasi-symmetric
2-$(37,9,8)$ design with intersection numbers 1 and 3,
{\it J. Combin. Designs} {\bf 25} (2017), 469--476.

\bibitem{HP} W. C. Huffman and V. Pless, 
{\it Fundamentals of Error-Correcting Codes}, Cambridge University Press, 
2003. 

\bibitem{JT} Z. Janko and T. Van Trung, A new biplane of order 9
with a small automorphism group, {\it J. Combin. Theory Ser. A} 
{\bf 42} (1986), 305--309.

\bibitem{MR2384014}
P.  Kaski and P.R.~J. \"Osterg\aa rd.
\newblock There are exactly five biplanes with {$k=11$}.
\newblock {\em J. Combin. Des.}, 16(2):117--127, 2008.

\bibitem{MR1700843} J. Key and V.D. Tonchev,
Computational results for the known biplanes of order $9$,
{\it London Math. Soc. Lecture Note Ser.} {\bf 245},
edited by J. W. P. Hirschfeld, S. S. Magliveras and M. J. de Resmini,
Cambridge University Press, Cambridge, 1997, pp.~113--122.

\bibitem{McG} G. McGuire, Quasi-3 designs, Chapter VI.47 in:
{\it Handbook of Combinatorial Designs}, 2nd Edition,
C.J. Colbourn and J.H. Dinitz, eds., Chapman \& Hall/CRC, Boca Raton, 2007,
pp. 576--578.

\bibitem{cliquer}
Niskanen, S.,  \"Osterg\r ard, P. R. J.:
 Cliquer User's Guide, Version 1.0. Tech. Rep. T48,
Communications Laboratory, Helsinki University of Technology,
Espoo, Finland, 2003.


\bibitem{SM} C.J. Salwach and J.A. Mezzaroba, The four known
biplanes with $k=11$, {\it Intern. J. Math. \& Math. Sci.} 
{\bf 2} (1979), 251--260.

\bibitem{Mohan} M.S. Shrikhande, {\it Quasi-Symmetric Designs},
Chapter VI.48 in:
{\it Handbook of Combinatorial Designs}, 2nd Edition,
C.J. Colbourn and J.H. Dinitz, eds., Chapman \& Hall/CRC, Boca Raton, 2007,
pp. 578--582.

\bibitem{Neu} A. Neumaier, Regular sets and quasi-symmetric 2-designs,
in: {\it Combinatorial Theory}, D. Jungnickel and K. Vedder, eds.,
Springer, Berlin, 1982, 258--275.

\bibitem{Ton17} V.D. Tonchev, Linearly embeddable designs,
Des. Codes Cryptogr. 85 (2017), 233--247.

\bibitem{Tcd} V.D. Tonchev, {\it Codes and Designs}, Chapter 15 in:
{\it Handbook of Coding Theory}, edited by V.S. Pless and W.C. Huffman,
Elsevier, Amsterdam, 1998, pp. 1229--1267.

\bibitem{T88} V. D. Tonchev, \emph{Combinatorial Configurations},
Wiley, New York, 1988.

\end{thebibliography}
\end{document}